\renewcommand*\env@matrix[1][*\c@MaxMatrixCols c]{%
  \hskip -\arraycolsep
  \let\@ifnextchar\new@ifnextchar
  \array{#1}}
\theoremstyle{plain}
\newtheorem*{cor}{Corollary}
\newtheorem*{defn}{Definition}
\newtheorem{claim}{Claim}
\newtheorem{problem}{Problem}
\title{Caps and Wickets}
\begin{document}

\begin{abstract}
Let $H_n^{(3)}$ be a 3-uniform linear hypergraph, i.e. any two edges have at most one vertex common. A special hypergraph, {\em wicket}, is formed by three rows and two columns of a $3 \times 3$ point matrix. In this note, we give a new lower bound on the Tur\'an number of wickets using estimates on cap sets. We also show that this problem is closely connected to important questions in additive combinatorics.
\end{abstract}

\author[]{Jakob F\"uhrer}
\address{Jakob F\"uhrer, Institute of Analysis and Number Theory, Graz University of Technology,
Kopernikusgasse 24/II,
8010 Graz, Austria.}
\email{jakob.fuehrer@tugraz.at}
\author{Jozsef Solymosi}
\address{Jozsef Solymosi, Department of Mathematics, University of British Columbia, Vancouver, Canada, and Obuda University, Budapest, Hungary}
\email{solymosi@math.ubc.ca}

\maketitle

\section{Introduction}

Gy\'arf\'as and S\'ark\"ozy asked for upper and lower bounds on the Tur\'an number of a linear hypergraph called the wicket. Although it was not obvious in the original formulation, it seems this problem is at the crossing point of important questions in additive combinatorics and extremal hypergraph theory.
To formulate the problem, let's begin with the basic definitions.

\begin{defn}
A hypergraph is linear if two edges intersect in at most one vertex.
\end{defn}

We will work with 3-uniform linear hypergraphs, i.e., where every edge has three vertices.

\begin{defn}
    The Tur\'an number of a linear 3-uniform hypergraph $F$, denoted by $ex_L(n,F)$, is the maximum number of edges of a 3-uniform linear hypergraph not containing a subgraph isomorphic to $F$.
\end{defn}

In \cite{GyS_1} Gy\'arf\'as and S\'ark\"ozy investigated $ex_L(n,F)$ for any $F$ with at most five edges. They had good estimates except for one configuration, called {\em wicket}. The wicket, denoted by $W$, is formed by three rows and two columns of a $3\times 3$ point matrix (Fig. \ref{Wicket}). Wickets are important structures in extremal hypergraph theory. Using a classical technique introduced by Ruzsa and Szemer\'edi in \cite{RSz}, we show the connection between its Tur\'an number and solution sets of equations like in (\ref{Ruzsa}). There are also connections to a conjecture of Gowers and Long, which we will describe later.

\begin{figure}[ht]
\centering
\includegraphics[scale=.3]{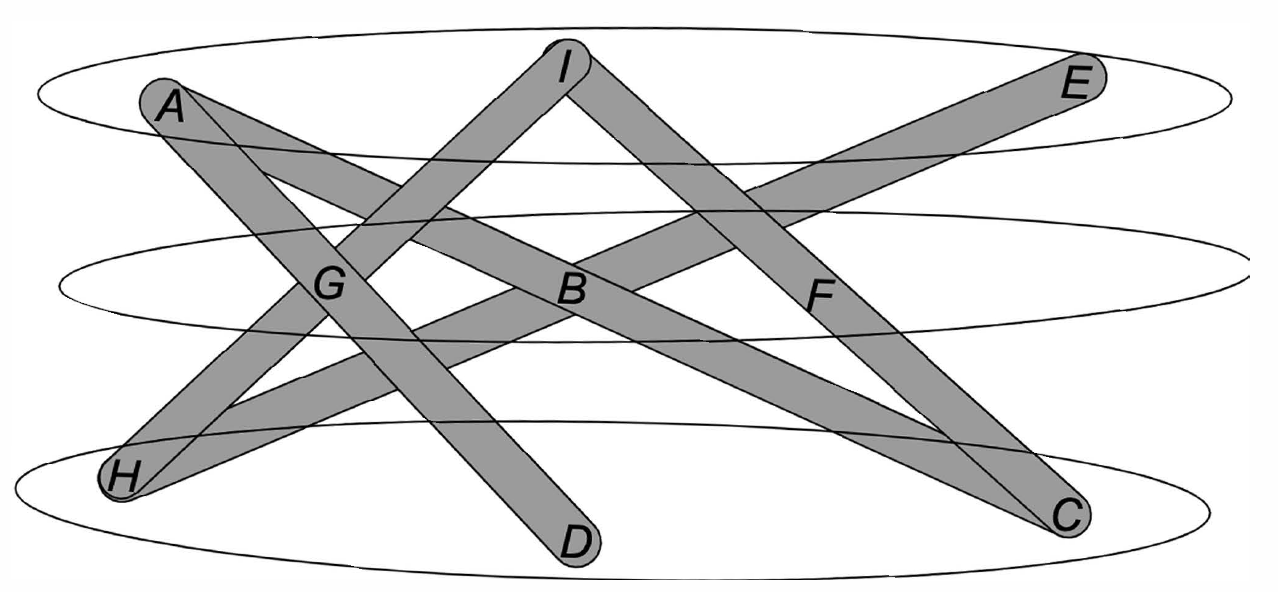}
\caption{The wicket is drawn as a three-partite hypergraph. If we add the edge spanned by vertices $D,E,F$, it is isomorphic to a $3\times 3$ grid.}
\label{Wicket}
 \end{figure}

\medskip
In a recent paper \cite{Soly}, it was proved that $ex_L(n,W)=o(n^2)$.
Only $ex_L(n,W)\geq cn^{3/2}$ type bounds were known for the lower bound \cite{GyS_1}. Our goal is to improve this bound.

\section{Caps}
In this section, we use cap sets to improve the lower bound on $ex_L(n,W)$. We also show that strong upper bounds on $ex_L(n,W)$ would lead to an improved bound on the size of cap sets in $\mathbb{F}_3^n$.

 \medskip
 
  The wicket has cycles of length four, so hypergraphs avoiding quadrilaterals are wicket-free, giving the $ex_L(n,W)\geq cn^{3/2}$ bound \cite{LV}. In our improvement, the new construction follows the steps of the classical work of Ruzsa and Szemer\'edi \cite{RSz}. In their construction, they defined a three-partite 3-uniform hypergraph. The three vertex classes, $A,B,C$, are three copies of $\mathbb{Z}/n\mathbb{Z}$. Let us suppose $S\subset\mathbb{Z}/n\mathbb{Z}$ is $AP_3$-free. In the hypergraph, three vertices $a\in A, b\in B$ and $c\in C$ are connected by an edge if there is an $s\in S$ such that $b=a+s$ and $c=a+2s$. In this setting, no six vertices carry three edges since it would rise to a solution of $s+t=2h$ (see in Fig. \ref{6,3}).

  \begin{figure}[ht]
\centering
\includegraphics[scale=.18]{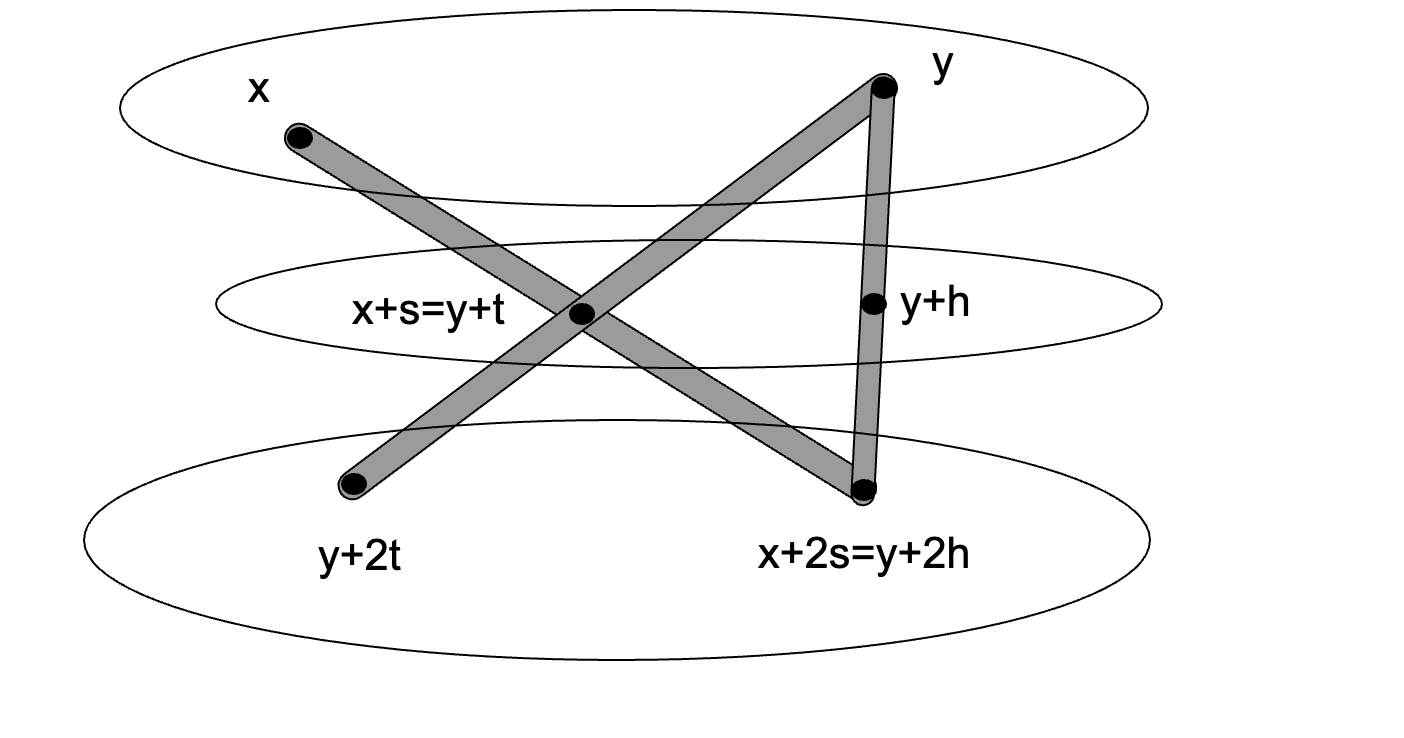}
\caption{After eliminating $x$ and $y$ we get $s+t=2h$}
\label{6,3}
 \end{figure}

Let's follow the same method as above, but here the three vertex classes, $A:=\mathbb{F}_3^n\times\{0\}$,  $B:=\mathbb{F}_3^n\times\{1\}$ and  $C:=\mathbb{F}_3^n\times\{2\}$, are three parallel hyperplanes in $\mathbb{F}_3^{n+1}$. 
Let $S\subset\mathbb{F}_3^n$ denote a maximal subset without 3-term arithmetic progressions (cap sets). It was proved recently in \cite{Nature} that there are cap sets of size $2.2202^n$, improving earlier bounds in \cite{Ed,Ty}. The $2.756^n$ upper bound on the size of the largest cap set was proved in \cite{EG}, following bounds on $AP_3$-free sets in $\mathbb{F}_4^n$ in \cite{CVP}.
In the hypergraph, denoted by $\mathcal{H}^{(3)}(A,B,C)$, three vertices $a\in A, b\in B$ and $c\in C$ are connected by an edge if there is an $s\in S':=S\times\{1\}$ such that $b=a+s$ and $c=a+2s$. As illustrated in Fig. \ref{Wicket_2}, a wicket defines four linear equations.

\begin{figure}[ht]
\centering
\includegraphics[scale=.15]{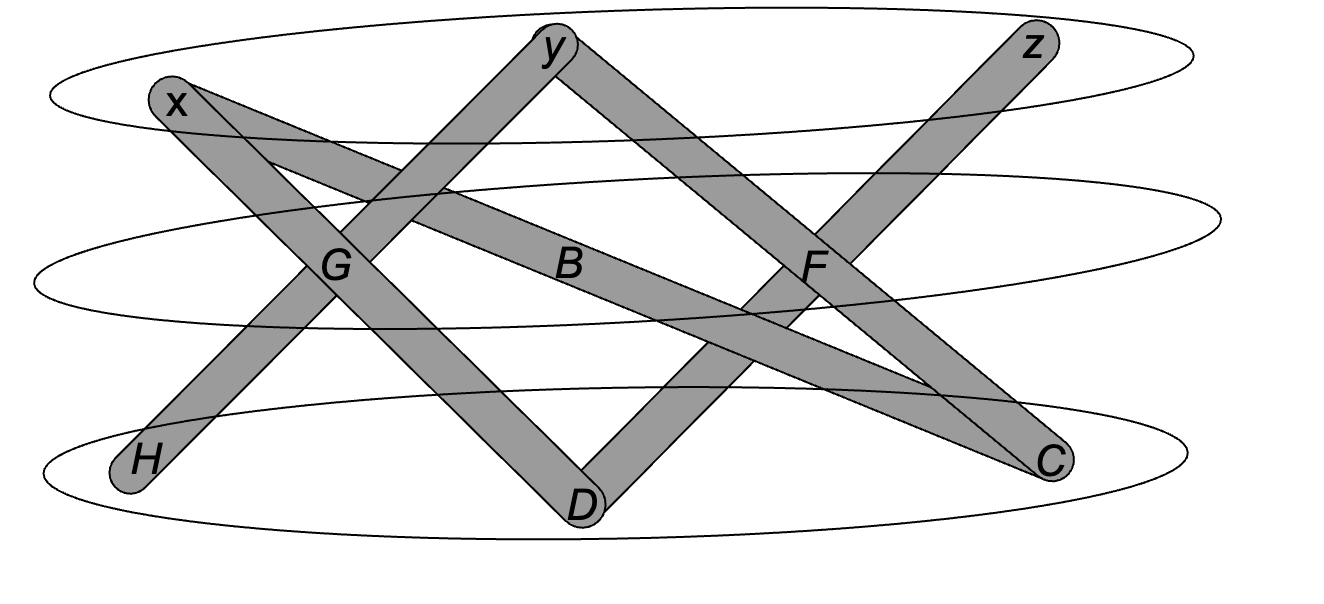}
\caption{In the wicket, $G=x+s=y+t, D=x+2s=z+2v, F=y+u=z+v$ and $C=x+2w=y+2u$. }
\label{Wicket_2}
 \end{figure}

\begin{align*}
  &x+s=y+t \\ 
  &x+2s=z+2v \\ 
  &y+u=z+v \\
  &x+2w=y+2u
  \end{align*}

After eliminating $x,y$ and $z$, in $\mathbb{F}_3^{n+1}$ we get the independent equations $w+v=2t$  and $s+t=u+v$.
The first equation has no solution for distinct $t,v,w$ values in $S'$ since it is a cap set in $\mathbb{F}_3^{n+1}$. Therefore, the only wickets in  $\mathcal{H}^{(3)}(A,B,C)$ are the ones coming from $t=v=w$ and $s=u$. Note here that $t\neq s$ as otherwise every edge would be equal; consequently, every wicket corresponds to $5$ lines in a $2$-dimensional affine subspace. Moreover, each of these affine subspaces contains $6$ lines corresponding to $t$ and $s$, each $5$ of them defining a wicket (see Fig. \ref{fig.plane}).

Each wicket $W'$ in $\mathcal{H}^{(3)}(A,B,C)$ intersects at most $30|S|$ other wickets: each edge $e$ of $W'$, together with an element of $s'\in S$, spans a $2$-dimensional affine subspace in which at most $6$ wickets, derived from $s'$ and the direction of $e$, intersect $W'$.

\begin{figure}[ht]
\centering
\includegraphics[scale=.5]{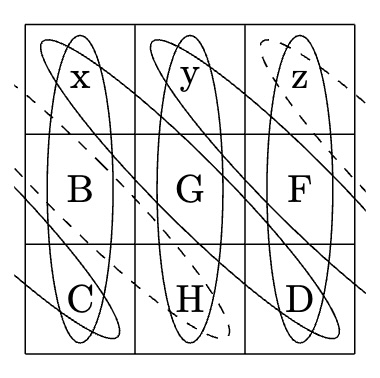}
\caption{Every wicket in $\mathcal{H}^{(3)}(A,B,C)$ lies in a $2$-dimensional affine subspace.}
\label{fig.plane}
\end{figure}

We now colour each edge of $\mathcal{H}^{(3)}(A,B,C)$ independently at random in $k:=\left(120|S|\right)^{1/4}$ colours, each with probability $1/k$. The probability that a wicket in $\mathcal{H}^{(3)}(A,B,C)$ is monochromatic is  $(1/k)^4$, independently of all but at most $30|S|$ other wickets. Using The Lov\'asz local lemma \cite{LLL}, we get a colouring of $\mathcal{H}^{(3)}(A,B,C)$  without monochromatic wickets with positive probability. Choosing the largest colour class, we get a wicket-free 3-uniform linear hypergraph with $3^{n+1}$ vertices and at least $3^n|S|/k\geq (3\cdot 2.2202^{3/4})^n/120^{1/4}$ edges, which gives the improved bound $ex_L(m,W)\geq m^{1.544}$.
The construction also avoids the $(6,3)$-configuration from Fig. \ref{6,3}.

\medskip
Gowers and Long conjectured (Conjecture 4.6 in \cite{GL}) that there is a $c>0$ such that if a 3-uniform linear hypergraph on $n$ vertices has no nine vertices spanning at least five edges, then its number of edges is $O(n^{2-c})$. 

\begin{claim}\label{GL}
Every 3-partite, 3-uniform linear hypergraph on nine vertices with at least five edges contains a wicket or a $(6,3)$-configuration.
\end{claim}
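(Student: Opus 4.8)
The plan is to encode the hypergraph as a partial Latin square and reduce the statement to a finite case check. Fix the three vertex classes $A,B,C$ and identify each edge with a triple $(i,j,k)$ with $i\in A$, $j\in B$, $k\in C$; linearity says that no two edges agree in two of the three coordinates. When $|A|=|B|=|C|=3$ this is exactly a partial Latin square on a $3\times 3$ array, with rows indexed by $A$, columns by $B$ and symbols by $C$: at most one symbol per cell, and no symbol repeated within a row or a column. In this language a $(6,3)$-configuration is a \emph{corner}: two filled cells in distinct rows and distinct columns carrying the same symbol, together with one of the two cells that complete them to a rectangle. A wicket is the pattern consisting of three pairwise disjoint edges (a parallel class, i.e.\ a common transversal of all three coordinates) together with two further disjoint edges each meeting all three; in particular a wicket forces every class to have size at least $3$.

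First I would record two easy reductions. Since five pairwise linear edges cannot all pass through one vertex, each class has size at least $2$; and since a wicket needs all classes of size at least $3$, I split according to whether some class has size exactly $2$. If, say, $|A|=2$, the edges live on a two-row array, so at least five edges force one full row and at least two cells in the other row. Each cell of the second row must avoid the symbol sitting above it in its own column, so (there being at most one symbol not already used in the full row) two such cells repeat a symbol of the full row; that repeated symbol together with the filled corner in the full row is a $(6,3)$. Thus whenever a class has size $2$ the hypergraph already contains a $(6,3)$, and we are left with the case $|A|=|B|=|C|=3$.

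For the main case I would first bound the number of edges under the assumption of $(6,3)$-freeness. If a line (a row, a column, or a symbol) contains three cells and there is any further cell, that extra cell is forced to repeat one of the three symbols of the line and to complete a corner, producing a $(6,3)$; hence a $(6,3)$-free array with at least four cells has every line of size at most $2$, and so at most $6$ cells. It then remains to classify the $(6,3)$-free arrays with exactly $5$ or $6$ cells. A five-cell array has its row, column and symbol multiplicities all equal to $(2,2,1)$, and the requirement that each of the two doubled symbols form a corner-free diagonal pair pins the array down, up to relabelling, to the wicket itself; a six-cell array has all multiplicities $(2,2,2)$, so its empty cells form a transversal, and the $(6,3)$-free filling is forced (up to relabelling) to be the full $3\times 3$ grid, which contains a wicket upon deleting one column. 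This shows that a $(6,3)$-free array with at least five cells contains a wicket, completing the proof.

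The main obstacle is the finite classification in this last step: checking that the five- and six-cell $(6,3)$-free arrays are exactly the wicket and the grid. I would organise this using the full symmetry available --- the group $S_3$ permuting the three classes (equivalently permuting the coordinates of the triples, which preserves both the partial-Latin-square condition and the $(6,3)$ and wicket patterns) together with independent relabellings within each class --- to normalise the positions of the singleton row, column and symbol and thereby reduce the enumeration to a handful of cases. The description of a $(6,3)$ as a repeated symbol with a filled corner is what keeps this casework short, since in a five-cell array each of the two doubled symbols immediately forbids two specific cells.
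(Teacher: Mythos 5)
Your approach is sound and genuinely different from the paper's. The paper argues directly on the hypergraph: it first rules out vertices of degree three (a degree-three star spans seven vertices; each of the two remaining edges meets the star in at most one vertex, else a triangle, i.e.\ a $(6,3)$, arises, and linearity then forces at least $7+1+2=10$ vertices), concludes that each class has three vertices with degrees $(2,2,1)$, and then inspects the four edges through the two degree-two vertices of the middle class, showing the fifth edge must complete a wicket. You instead encode the tripartite linear hypergraph as a partial Latin square and classify the $(6,3)$-free arrays with at least five cells. Your dictionary is correct: the wicket is three disjoint edges plus two disjoint edges meeting all three, and a $(6,3)$ is exactly a repeated symbol together with a filled rectangle corner. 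Your line-size bound (a full line plus any further cell forces a $(6,3)$) is a clean substitute for the paper's star-counting step, and the classification claims are true: up to relabelling, the five cells form either a $2\times 2$ block plus the opposite corner cell, every Latin filling of which contains a $(6,3)$, or a staircase pattern whose unique $(6,3)$-free Latin filling is the wicket; the six-cell case is forced to be the grid. What your route buys is stronger information --- an actual classification of the extremal configurations, with the symmetry group made transparent --- at the price of a finite enumeration; the paper avoids enumeration by a clever choice of which four edges to examine, but yields only the existence statement.

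Two caveats, both fixable but real. First, your proposal defers its decisive step: the classification of five- and six-cell $(6,3)$-free arrays is asserted and organised but not carried out, so as written this is a plan rather than a proof; the enumeration is short (two position patterns in the five-cell case, one in the six-cell case), but it must actually appear. Second, the $|A|=2$ reduction is imprecise. Linearity alone does not prevent five edges through one vertex --- that needs the nine-vertex count, since five such edges span eleven vertices. More importantly, the assertion that ``each cell of the second row must avoid the symbol sitting above it'' presumes every second-row cell lies below a filled cell, which is only guaranteed after normalising, by the coordinate symmetry you invoke at the end, so that the column class has exactly three covered vertices (with $|A|=2$ the other two classes have sizes $3$ and at most $4$, and you must put the size-$3$ class in the column role); and then only one second-row cell, not two, is guaranteed to repeat a symbol of the full row --- which is all you need, since that single repetition plus the filled cell above it is the required $(6,3)$.
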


\begin{proof}
The claim follows from Theorem 2.2 in \cite{GyS_1}, but we include a simple argument for completeness.
If a vertex is contained in three edges forming a seven-vertex star, then any of the remaining two edges shares at most one of these seven since connecting two would form a triangle (a $(6,3)$-configuration). Then, the remaining two edges are incident to at most two vertices of the seven and intersect in at most one because it is a linear hypergraph. Then, the minimum number of vertices is $7+1+2=10$, which is not allowed. Therefore, every vertex has degree one or two. It also means that all three partitions have three vertices. Six vertices have degree two. The remaining three have degree one, one in each partition class. Select the two degree-two vertices in the middle vertex class, with edges $e,e'$ and $f,f'$ (see Fig \ref{twox_fig}). 

\begin{figure}[ht]
\centering
\includegraphics[scale=.4]{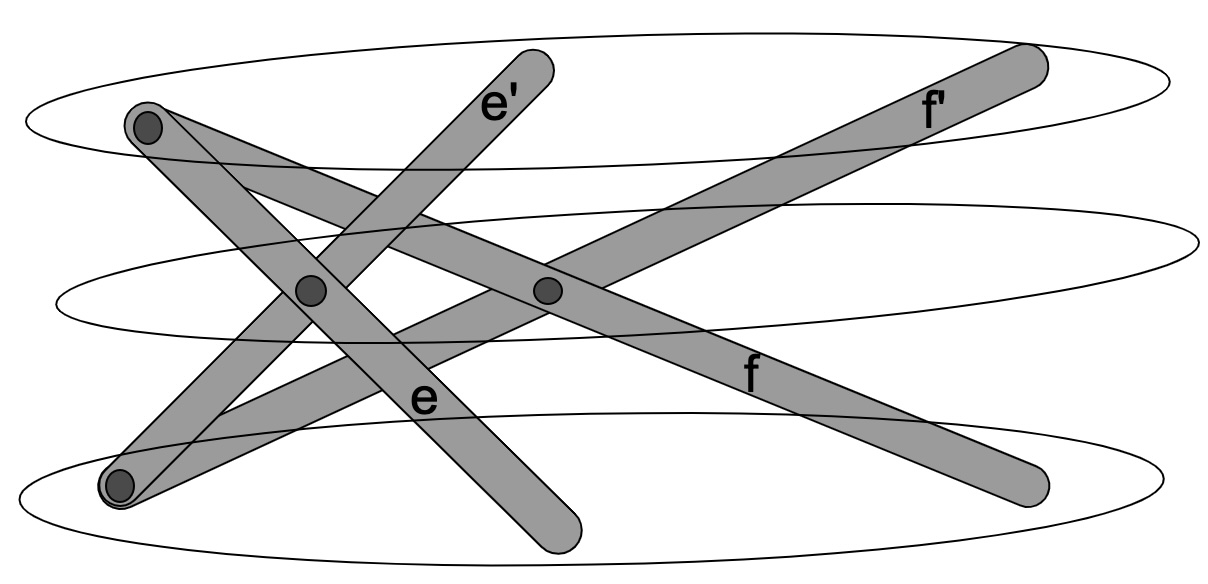}
\caption{$e, f$ and $e',f'$ share a vertex}
\label{twox_fig}
 \end{figure}
 
We can assume that $e, f$ and $e',f'$ share a vertex. Then the remaining edge has a degree-one vertex in the middle vertex class and connects two degree-one vertices of the subgraph spanned by $e,e',f,f'$, forming a wicket. 
\end{proof}

Claim \ref{GL} implies that in the Gowers-Long conjecture $c\leq 0.456$, improving the previous $c\leq 0.5$ bound for this case.

Based on some computational evidence, Tyrrell conjectured that there are cap sets of size $2.233^n$ (Conjecture 4.3 in \cite{Ty}). Such cap sets would slightly improve the bound beyond $ex_L(m,W)\geq m^{1.548}$.

We also record the following corollary of our construction:
\begin{cor}
Any upper bound of the form $ex_L(m,W)\leq m^{2-c}$ would lead to an upper bound of $3^{\frac{4}{3}(1-c)n}$ for the size of a cap set in $\mathbb{F}_3^n$.
\end{cor}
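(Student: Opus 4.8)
The plan is to run the construction of $\mathcal{H}^{(3)}(A,B,C)$ in reverse: instead of starting from a large cap set and producing a wicket-free hypergraph, I start from the hypothetical upper bound on $ex_L(m,W)$ and use it to constrain the size of the cap set $S$ that went into the construction. The key observation is that the entire construction is essentially \emph{reversible} up to the polynomial loss incurred by the Lov\'asz local lemma colouring step. Concretely, the construction takes a cap set $S\subseteq\mathbb{F}_3^n$ and produces, on $m=3^{n+1}$ vertices, a wicket-free linear 3-uniform hypergraph with at least $3^n|S|/k$ edges where $k=(120|S|)^{1/4}$, i.e.\ with at least $c\cdot 3^n|S|^{3/4}$ edges for an absolute constant $c$. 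So any \emph{upper} bound on the number of edges of a wicket-free hypergraph immediately bounds $|S|$ from above.

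First I would fix the notation: suppose the upper bound $ex_L(m,W)\leq m^{2-c}$ holds for all $m$. Apply it with $m=3^{n+1}$, so that $ex_L(3^{n+1},W)\leq (3^{n+1})^{2-c}=3^{(n+1)(2-c)}$. Second, I would invoke the construction from this section: the wicket-free hypergraph it produces has at least $3^n|S|/k$ edges, and since this hypergraph is wicket-free and linear on $3^{n+1}$ vertices, its edge count cannot exceed $ex_L(3^{n+1},W)$. Chaining the two inequalities gives
\[
\frac{3^n|S|}{k}\;\leq\;3^{(n+1)(2-c)}.
\]
Third, I would substitute $k=(120|S|)^{1/4}$ and solve for $|S|$. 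Writing $3^n|S|/(120|S|)^{1/4}=120^{-1/4}\,3^n|S|^{3/4}$, the inequality becomes $120^{-1/4}\,3^n|S|^{3/4}\leq 3^{(n+1)(2-c)}$, hence
\[
|S|^{3/4}\;\leq\;120^{1/4}\,3^{(n+1)(2-c)-n},
\]
and raising to the power $4/3$ yields $|S|\leq 120^{1/3}\,3^{\frac{4}{3}\left[(n+1)(2-c)-n\right]}$.

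Fourth, I would extract the exponential rate in $n$. The exponent of $3$ is $\tfrac{4}{3}\left[(n+1)(2-c)-n\right]=\tfrac{4}{3}\left[n(1-c)+(2-c)\right]$, whose leading term in $n$ is $\tfrac{4}{3}(1-c)n$; the additive constant $\tfrac{4}{3}(2-c)$ and the factor $120^{1/3}$ are absorbed into a multiplicative constant independent of $n$. Therefore $|S|\leq C\cdot 3^{\frac{4}{3}(1-c)n}$, which is the claimed bound on the size of a cap set in $\mathbb{F}_3^n$ (the cap set $S$ was an arbitrary maximal cap set, so this bounds the maximum cap set size). The main obstacle here is not any single computation but rather making sure the logic of the reversal is airtight: one must verify that the construction genuinely produces a wicket-free linear hypergraph for \emph{every} cap set $S$ (which is exactly what was established earlier in the section via the local lemma), so that the assumed upper bound truly applies to it. Once that is in hand, the corollary is just the algebraic inversion of the chain of inequalities, and the only care needed is tracking whether the sub-exponential constants and the $+1$ in the dimension affect the stated leading exponent — they do not.
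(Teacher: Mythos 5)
Your proposal is correct and is precisely the argument the paper intends: the corollary is recorded as an immediate consequence of the construction, namely that any cap set $S\subseteq\mathbb{F}_3^n$ yields a wicket-free linear hypergraph on $m=3^{n+1}$ vertices with at least $3^n|S|^{3/4}/120^{1/4}$ edges, so the assumed bound $ex_L(m,W)\leq m^{2-c}$ inverts to $|S|\leq C\cdot 3^{\frac{4}{3}(1-c)n}$. Your bookkeeping of the constant $120^{1/3}$ and the $+1$ in the dimension, and your remark that the construction works for every cap set (so the bound applies to the maximal one), match the paper's intent exactly.
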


For example, an $ex_L(m,W)\leq m^{1.69}$ bound would improve the cap set bound of Ellenberg and Gijswijt \cite{EG}.

\section{Related Problems}

In this section, we list three problems. Each can potentially improve the previous bound, maybe even to $ex_L(m,W)\geq m^{2-\varepsilon}$.

\medskip
The hypergraph construction was based on the observation that the 
\begin{equation}\label{Ruzsa}
    3x+y=2z+2w 
\end{equation}
equation has no non-trivial solutions in cap sets in $\mathbb{F}_3^n$. This equation is a classic example from Ruzsa's work on solution sets of linear equations in integers \cite{Ru}.

\begin{problem}[Ruzsa (1993)]
What is the size of the largest subset of the first $n$ natural numbers without non-trivial solutions to (\ref{Ruzsa})?   
\end{problem}

It is possible that there are sets $S\subset [n]$ without non-trivial solutions, with $|S|=n^{1-o(1)}$. It would give the $ex_L(m,W)= m^{2-o(1)}$ bound as conjectured in \cite{Soly}. But for further improvements, finding any abelian group with a large subset without a non-trivial solution to (\ref{Ruzsa}) or a similar linear equation would be enough.

\medskip
A possible variant of the Ruzsa-Szemer\'edi construction is the following: Instead of defining the edges as $x,x+s,x+2s$, we can use a different parameter, $\alpha$ and define the edges as $x,x+s,x+\alpha s$. Instead of $\mathbb{F}_3^n$, let's define $\mathcal{H}^{(3)}(A,B,C)$ on three vertex classes as three copies of 
$\mathbb{Z}/n\mathbb{Z}$ where $n=k^2-k+1$ with some $k\geq 2, k\in \mathbb{N}$.
The edges are defined by a set $S\subset \mathbb{Z}/n\mathbb{Z}$, which has no non-trivial solution to the $kx-(k-1)y=z$ equation.
Three vertices $a\in A, b\in B$ and $c\in C$ are connected by an edge if there is an $s\in S$ such that $b=a+s$ and $c=a+ks$.
This definition gives a linear hypergraph since both $k$ and $k-1$ are relative prime to $n$. Therefore, any two vertices of an edge determine the third one uniquely. The four equations determined by the vertices of the wicket are

\begin{align*}
  &x+s=y+t \\ 
  &x+ks=z+kv \\ 
  &y+u=z+v \\
  &x+kw=y+ku
  \end{align*}

After eliminating $x,y,z$ and $s$, we get the 

\[(k^2-k+1) u-(k^2-k) w +(k-1) v=k t\]

equation, which gives

   \begin{equation*}
    kt-(k-1)v\equiv w \pmod{k^2-k+1}.
   \end{equation*}

Since $S$ has no solution to this equation, 
$\mathcal{H}^{(3)}(A,B,C)$ is wicket-free. 

\begin{problem}
   What is the largest set $S\subset \mathbb{Z}/n\mathbb{Z}$, which has no non-trivial solution to the 
   \begin{equation}\label{modeq}
    kx-(k-1)y\equiv z \pmod{n}
   \end{equation}
   equation, where 
   $n=k^2-k+1$ for some large $k\in \mathbb{N}$?  
   \end{problem}

We considered mod $n=k^2-k+1$ to ensure that one more variable ($u$ in this case) will vanish after the elimination of variables, so we left with three unknowns in equation (\ref{modeq}). There are different ways to achieve this. It gives a nice related geometric problem if we formulate a construction using Eisenstein integers \cite{We}. 

\medskip
Let $\omega=\frac{-1+i\sqrt{3}}{2}$. The Eisenstein integers are the complex numbers of the form $a+\omega b$ where $a,b\in \mathbb{Z}$. In our new $\mathcal{H}^{(3)}(A,B,C)$ hypergraph the vertex sets are Eisenstein integers, $$E_n=A=B=C=\{a+\omega b : N(a+\omega b)=a^2+b^2\leq n\}.$$

The hypergraph has $\Delta n^2$ vertices (for sharp estimates on  $\Delta$ we refer to \cite{LP}). We use a subset of $E_n$ to define the edges. The Eisenstein integers are points of a triangular grid in the complex plane. Let $S_n\subset E_n$ denote the largest subset not containing three points of an equilateral triangle.
Three vertices $a\in A, b\in B$ and $c\in C$ are connected by an edge if there is an $s\in S_n$ such that $b=a-s$ and $c=a+\omega s$.
This definition gives a linear hypergraph since the Eisenstein integers form a commutative ring of algebraic integers in the algebraic number field $\mathbb{Q}(\omega)$. Therefore, any two vertices of an edge determine the third one uniquely in the hypergraph. The four equations determined by the vertices of the wicket are
\begin{align*}
  &x-s=y-t \\ 
  &x+\omega s=z+\omega v \\ 
  &y-u=z-v \\
  &x+\omega w=y+\omega u
  \end{align*}

After eliminating $x,y,z$ and $s$ we get the $2 t=-i \sqrt{3} v+v+i \sqrt{3} w+w$ equation, or equivalently,
\begin{equation}\label{triangle}
    t-w=\omega(w-v).
\end{equation}

But a non-trivial solution to equation (\ref{triangle}) would raise an equilateral triangle by points $t,v,w\in S_n$, which was not allowed.

\begin{problem}
    What is the largest subset of a triangular grid without three points forming an equilateral triangle?
\end{problem}

It is easy to use a Behrend-type construction \cite{Be} to find a large subset avoiding an equilateral triangle with a fixed orientation; however, it seems like a difficult problem to estimate the size of the largest subset avoiding equilateral triangles in any orientation.

\begin{figure}[ht]
\centering
\includegraphics[scale=.2]{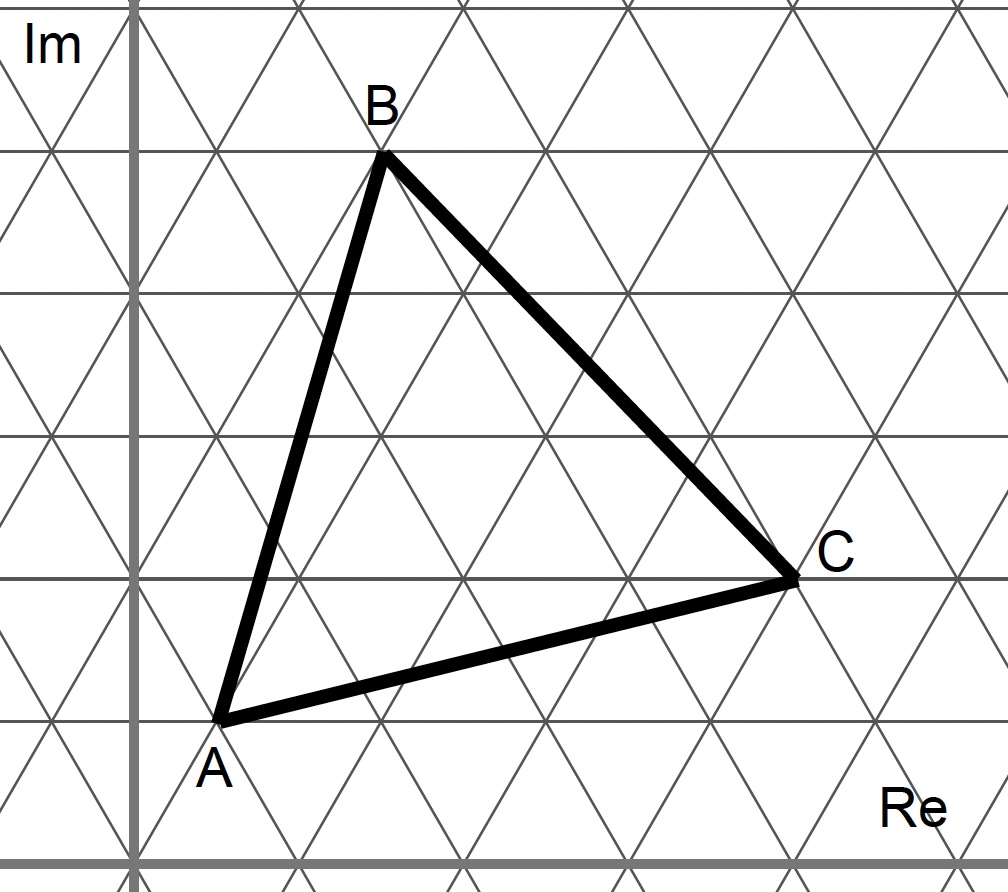}
\caption{An equilateral triangle formed by Eisenstein integers }
\label{Triangle}
 \end{figure}

\section{Acknowledgements}
J. F. was supported by the Austrian Science Fund (FWF) under
the project W1230. J. S. was partly supported by an NSERC Discovery grant, and OTKA K  grants no. 133819. The authors are thankful to Ben Green for the useful suggestions resulting in Problem 3.


\begin{thebibliography}{100}

\bibitem{Be}
F. Behrend. 
On sets of integers which contain no three terms in arithmetic progression, 
Proc. Nat. Acad. Sci., 32:331--332, 1946

\bibitem{CVP}
 Croot, Ernie; Lev, Vsevolod; Pach, Peter (2017), Progression-free sets in 
${\displaystyle Z_{4}^{n}}$ are exponentially small, Annals of Mathematics, 185 (1): 331--337

\bibitem{Nature} Romera-Paredes, B., Barekatain, M., Novikov, A. et al. 
Mathematical discoveries from program search with large language models. 
Nature 625, 468--475 (2024).

\bibitem{Ed} 
Edel, Y. Extensions of generalized product caps. Des. Codes Cryptogr. 31, 5–14 (2004).

\bibitem{EG}
Ellenberg, Jordan S.; Gijswijt, Dion (2017), On large subsets of 
${\displaystyle \mathbb {F} _{q}^{n}}$ with no three-term arithmetic progression, 
Annals of Mathematics, 185 (1): 339--343


\bibitem{GL}
William Gowers and Jason Long,  The length of an $s$-increasing sequence of $r$-tuples. 
Combinatorics, Probability and Computing, (2021) 30(5), 686--721.


\bibitem{GyS_1} Andr\'as Gy\'arf\'as and G\'abor N. S\'ark\"ozy,
The linear Turán number of small triple systems or why is the wicket interesting?
Discrete Mathematics,
Volume 345, Issue 11,
2022,

\bibitem{LP} Peter D Lax, Ralph S Phillips,
The asymptotic distribution of lattice points in Euclidean and non-Euclidean spaces,
Journal of Functional Analysis,
Volume 46, Issue 3,
1982,
Pages 280--350,


\bibitem{LV} Felix Lazebnik and Jacques Verstra\"ete,
On Hypergraphs of Girth Five,
The Electronic Journal of Combinatorics,
Volume 10 (2003) Article Number: R25

\bibitem{LLL} Erd\H{o}s, Paul and Lov\'asz, L\'aszló (1975). Problems and results on 3-chromatic hypergraphs and some related questions. 
In A. Hajnal; R. Rado; V. T. S\'os (eds.). Infinite and Finite Sets (to Paul Erdős on his 60th birthday). Vol. II. North-Holland. pp. 609--627.

\bibitem{Ru}
Imre Z. Ruzsa,
Solving a linear equation in a set of integers I. Acta Arithmetica 65.3 (1993): 259--282.


\bibitem{RSz}
Imre~Z. Ruzsa and Endre Szemer\'edi.
\newblock Triple systems with no six points carrying three triangles.
\newblock In {\em Colloq. Math. Soc. J. Bolyai  Combinatorics II: 939--945, 1978}

\bibitem{Soly}
Jozsef Solymosi, Wickets in 3-uniform hypergraphs,
Discrete Mathematics,
Volume 347, Issue 6,
2024,

\bibitem{Ty} Tyrrell, F. New lower bounds for cap sets. Discrete Analysis (2023)

\bibitem{We} Weisstein, Eric W. "Eisenstein Integer." From MathWorld--A Wolfram Web Resource. https://mathworld.wolfram.com/EisensteinInteger.html


\end{thebibliography}
\end{document}